\documentclass[12pt,a4paper]{amsart} 
\usepackage{amsmath}
\usepackage{amsthm}
\usepackage{amssymb}
\usepackage{ot1patch}
\usepackage{mathrsfs}

\newtheorem{thm}{Theorem}[section]
\newtheorem{lem}[thm]{Lemma}

\theoremstyle{remark}

\newtheorem*{rem*}{Remark}

\theoremstyle{definition}

\newtheorem{ex}[thm]{Example}

\numberwithin{equation}{section}

\newcommand{\C}{\mathbb{C}}


\textwidth 13.5 cm

\begin{document}

\title{A c-holomorphic effective Nullstellensatz with parameter}

\author{Maciej P. Denkowski
}\address{Jagiellonian University, Faculty of Mathematics and Computer Science, Institute of Mathematics, \L ojasiewicza 6, 30-348 Krak\'ow, Poland}\email{maciej.denkowski@uj.edu.pl}\date{June 3rd 2014}
\keywords{Nullstellensatz, c-holomorphic functions}
\subjclass{32B15, 32C25, 14Q20}

\begin{abstract} 
We prove a local Nullstellensatz with parameter for a continuous family of c-holomorphic functions with an effective exponent independent of the parameter: the local degree of the cycle of zeroes of the central section section. We assume that this central section defines a proper intersection and we show that we can omit this assumption in case of isolated zeroes.
\end{abstract}

\maketitle

\section{Introduction}

The idea of writing this note comes from an observation made in our recent paper on the \L ojasiewicz inequality with parameter \cite{D4}. Using the methods of \cite{PT} that led to the c-holomorphic effective Nullstellensatz presented in \cite{D2} and some intersection theory results introduced in \cite{T}, we obtain an effective Nullstellensatz for a continuous family of c-holomorphic functions. 

We shall briefly recall the notion of \textit{c-holomorphic functions} introduced by Remmert in \cite{R} (see also \cite{Wh}). These are complex continuous functions defined on an analytic set (or more generally, analytic space) $A$ that are holomorphic at its regular points $\mathrm{Reg} A$. We denote by $\mathcal{O}_c(A)$ their ring for a fixed $A$. They have similar properties to those of holomorphic functions. Nevertheless, they form a larger class and do not allow the use of methods based on differentiability. Their main feature is the fact that they are characterized among all the continuous functions $A\to {\C}$ by the analycity of their graphs (see \cite{Wh}) which makes geometric methods appliable. Some effective results obtained for this class of functions from the geometric point of view are presented in \cite{D1}--\cite{D4}.  In particular, we have an identity principle on irreducible sets and a Nullstellensatz --- see \cite{D2}.

 Throughout the paper we are working with a topological, locally compact space $T$ that in addition is \textit{1st countable}. We fix also a pure $k$-dimensional analytic subset $A$ of an open set $\Omega\subset{\C}^m$ with $0\in A$ and consider a continuous function 
$f=f(t,x)\colon T\times A\to{\C}^n$. We will use the notation $f_t(x)=f(t,x)$ and $f_t=(f_{t,1},\dots, f_{t,n})$. We assume that each $f_t$ is c-holomorphic. Of course, this happens to be true iff all the components $f_{t,j}\in\mathcal{O}_c(A)$. 

Let $I_t(U)\subset\mathcal{O}_c(A\cap U)$ denote the ideal generated by $f_{t,1},\dots, f_{t_n}$ restricted to $U\cap A\neq\varnothing$ where $U$ is an open set. 

If a mapping $h\in\mathcal{O}_c(A,{\C}^n)$ defines a proper intersection, i.e. $h^{-1}(0)$ (which corresponds to the interesection of the graph $\Gamma_h$ with $\Omega\times\{0\}^n$) has pure dimension $k-n$ (\footnote{By the identity principle presented in \cite{D2}, neither of the components of $h$ can vanish identically on any irreducible component of $A$.}), then we introduce the \textit{cycle of zeroes} for $h$ as the Draper proper intersection  cycle (\cite{Dr})
$$
Z_h:=\Gamma_h\cdot (\Omega\times\{0\}^n).
$$
In other words, $Z_h$ is a formal sum $\sum \alpha_j S_j$ where $\{S_j\}$ is the locally finite family of the irreducible components of $h^{-1}(0)$ and $\alpha_j=i(\Gamma_h\cdot(\Omega\times\{0\});S_j)$ denotes the Draper intersection multiplicity along $S_j$ (cf. \cite{Dr} and \cite{Ch}). We define the local degree (or Lelong number) of $Z_h$ at a point $a$ usually as $\mathrm{deg}_a Z_h:=\sum\alpha_j \deg_a S_j$ with the convention that $\deg_a S_j=0$ if $a\notin S_j$. 

Note that $\Gamma_h$ is a pure $k$-dimensional analytic set and $k-n$ is the minimal possible dimension for the interesection $\Gamma_h\cap(\Omega\times\{0\}^n)$ which means this interesection is what we call a proper one. 

Now we are ready to state our main result:

\begin{thm}\label{main}
Let $T,A,f$ be as above. Assume moreover that $f_{t}(0)=0$ for any $t$, and that $f_{t_0}^{-1}(0)$ has pure dimension $k-n$. Then there is a neighbourhood $T_0\times U$ of $(t_0,0)\in T\times A$ such that \begin{enumerate}
\item for all $t\in T_0$, the sets $f_t^{-1}(0)$ have pure dimension $k-n$, too;
\item for all $t\in T_0$, for any $g\in \mathcal{O}_c(A\cap U)$ vanishing on $f_t^{-1}(0)\cap U$, $g^\delta\in I_t(U)$ where $\delta=\mathrm{deg}_0 Z_{f_{t_0}}$ is independent of $t\in T_0$;
\item if $g\colon T_0\times (A\cap U)\to {\C}$ is continuous and such that each $g_t\in\mathcal{O}_c(A\cap U)$ vanishes on $f_t^{-1}(0)\cap U$, then there is a continuous function $h\colon T_0\times (A\cap U)\to {\C}^n$ with c-holomorphic $t$-sections and such that $g^\delta=\sum_{j=1}^n h_{j}f_{j}$ where $\delta$ is as above.
\end{enumerate} 
\end{thm}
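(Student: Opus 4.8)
The plan is to build on the non-parametric c-holomorphic effective Nullstellensatz from \cite{D2}, which already gives, for a single c-holomorphic map $h$ defining a proper intersection with isolated-type control, the bound $g^{\deg_0 Z_h}\in I(U)$. The parametric version requires two new ingredients: first, a semicontinuity argument showing that properness of the intersection and the value of $\deg_0 Z_{f_t}$ persist on a neighbourhood $T_0$ of $t_0$; second, a way to make the algebraic membership relations depend continuously (with c-holomorphic $t$-sections) on the parameter.

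First I would work with the graph map. Since $f$ is continuous and each $f_t$ is c-holomorphic, the family of graphs $\Gamma_{f_t}\subset\Omega\times\C^n$ forms a continuous family of pure $k$-dimensional analytic sets, and likewise $t\mapsto f_t^{-1}(0)$ corresponds to $\Gamma_{f_t}\cap(\Omega\times\{0\}^n)$. The central section being proper of dimension $k-n$, I would invoke the intersection-theoretic stability results of \cite{T} (continuity of the Draper cycle under deformation, and upper semicontinuity of the local degree $\deg_0$) together with the fact that $k-n$ is the minimal possible dimension: on a small enough $T_0\times U$ the fibre dimension cannot jump down, and upper semicontinuity of $\deg_0 Z_{f_t}$ combined with lower semicontinuity coming from properness forces $\deg_0 Z_{f_t}$ to be locally constant, equal to $\delta=\deg_0 Z_{f_{t_0}}$. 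This yields (1) and pins down the exponent in (2) and (3). The technical care here is to shrink $U$ so that no spurious components of $f_t^{-1}(0)$ appear away from $0$ and that the only contribution to $\deg_0$ comes from components through $0$; this is exactly the kind of argument carried out in \cite{D4} for the \L ojasiewicz inequality, so I would reuse that localization.

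Next, for (2) and (3), the idea is to follow the Płoski--Tworzewski scheme from \cite{PT} as adapted in \cite{D2}, but performed over the parameter. One passes to the graph $\Gamma_{f_t}$, uses a generic linear projection to realize $A$ (or $\Gamma_{f_t}$) as a branched cover, and writes $g^\delta$ via a division/residue formula whose coefficients are symmetric functions in the fibres of that cover. The point is that, because $\delta$ is the same for every $t$ and the cover structure varies continuously in $t$ (the projection can be chosen generic simultaneously for all $t$ near $t_0$, again by shrinking $T_0$), the coefficients $h_{t,j}$ produced by this procedure are continuous in $(t,x)$ and c-holomorphic in $x$ for fixed $t$. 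For (3) one applies the same construction directly to the given continuous family $g_t$, so that $h=h(t,x)$ inherits joint continuity from $g$ and from the continuously-varying cover data; for (2) one takes $g$ independent of $t$ and keeps only the membership $g^\delta\in I_t(U)$, discarding continuity in $t$.

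The main obstacle I expect is precisely the uniformity of the effective step: ensuring that the Płoski--Tworzewski--type division producing $g^\delta=\sum h_j f_j$ can be carried out with a single exponent $\delta$ and with coefficients depending continuously on $t$, rather than obtaining an exponent or a projection that degenerates as $t\to t_0$. This hinges on the local constancy of $\deg_0 Z_{f_t}$ (so that the generic choices in \cite{PT} remain valid uniformly) and on the continuity of the fibres of the branched cover, i.e. on Rouché-type / proper-mapping stability. Once the semicontinuity of the cycle is established and $T_0$ is shrunk so that all the genericity conditions and the fibre count hold simultaneously, the rest is a parametrized rerun of the proof in \cite{D2}, and the claimed independence of $\delta$ from $t$ follows.
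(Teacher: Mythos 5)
Your overall plan (stability of the cycle of zeroes, then a parametrized rerun of the P\l oski--Tworzewski/\cite{D2} division) is the right one and is essentially the route the paper takes, but the step on which you explicitly hang the uniform exponent is false. You claim that upper semicontinuity of $\deg_0 Z_{f_t}$ ``combined with lower semicontinuity coming from properness'' forces $\deg_0 Z_{f_t}$ to be locally constant, equal to $\delta$. There is no such lower semicontinuity: already for $A=\C$, $f(t,x)=x(x-t)$, $t_0=0$, one has $f_t(0)=0$ for all $t$, the intersection is proper, $\deg_0 Z_{f_{t_0}}=2$, yet $\deg_0 Z_{f_t}=1$ for $t\neq 0$ (the multiplicity at the origin splits, with part of it escaping to the nearby zero $x=t$). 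Under Tworzewski convergence the Lelong number at a fixed point is only upper semicontinuous, so ``$\deg_0 Z_{f_t}=\delta$ for all $t\in T_0$'' is simply not available, and it is not what makes the exponent uniform. The quantity that is locally constant is not the local degree at $0$ but the \emph{total} degree of the intersection of $Z_{f_t}$ with a fixed slice: one chooses $\ell$ and a polydisc $V\times W$ with $(\{0\}^{k-n}\times\overline W)\cap f_{t_0}^{-1}(0)=\{0\}$, forms $\varphi_t=(f_t,\ell)$, and shows that the covering multiplicity $q_t$ of $\varphi_t|_{A\cap(V\times W)}$ over a polydisc $P$ equals $\deg\bigl((\{0\}^{k-n}\times W)\cdot Z_{f_t}\bigr)$ (via the Stoll formula, Draper's identification of local projection multiplicities with intersection multiplicities, and \cite{TW2} Theorem 2.2); the definition of Tworzewski convergence then gives $q_t=q_{t_0}=\deg_0 Z_{f_{t_0}}=\delta$ for $t$ near $t_0$, the last equality using the choice of $W$. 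This $q_t$, not $\deg_0 Z_{f_t}$, is the degree of the characteristic polynomial and hence the exponent on the fixed neighbourhood $U=V\times W$; note also that an exponent attached only to the components through $0$ could not work on a fixed $U$, since $g$ is merely assumed to vanish on all of $f_t^{-1}(0)\cap U$.

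Apart from this, your outline coincides with the paper's: Kuratowski convergence of the graphs gives (1) and the cycle convergence $Z_{f_t}\stackrel{T}{\longrightarrow}Z_{f_{t_0}}$; for fixed $t$ statement (2) is then exactly \cite{D2} Lemma 3.1 applied with the covering $\varphi_t$ of multiplicity $\delta$; and for (3) one redoes that lemma with parameter, using the characteristic polynomial of $g_t$ with respect to $\varphi_t$, continuity of its symmetric-function coefficients in $(t,w)$ plus Riemann extension through the critical locus. Your sketch of (3) stays vague at the last step: to produce the decomposition $g^\delta=\sum h_jf_j$ with \emph{jointly continuous} $h_j$ one still needs a Hadamard-type division with parameter (writing $a_j(t,w)=\sum_i w_i a_{ji}(t,w)$ with $a_{ji}$ continuous and holomorphic in $w$), which requires an argument that local uniform convergence of the sections propagates to their derivatives; this is a small but necessary lemma, proved in the paper, that your ``continuously-varying cover data'' phrase does not by itself supply.
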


This is somehow related to some results of \cite{GGVL}. In the particular case when $\dim f_{t_0}^{-1}(0)=0$, we relax the assumptions in the last Section.

\section{Proof of the main result}

The proof of Theorem \ref{main} will be derived in several steps from some results presented in \cite{D4}.

First, we easily observe that \textsl{all} the results from \cite{D4} Section 2 hold true (with exactly the same proofs) for the mapping $f\colon T\times A\to{\C}^n$ provided $f_{t_0}^{-1}(0)$ has pure dimension $k-n$, i.e. the intersection $\Gamma_{f_{t_0}}\cap(\Omega\times\{0\}^n)$ is proper.

We will briefly state in clear the main arguments. But first let us recall that a family $Z_t=\sum_s \alpha_{t,s} S_{t,s}$ ($t\in T$) of positive (\footnote{i.e. with non-negative integer coefficients $\alpha_{t,s}$}) analytic $k$-cycles (\footnote{i.e. each irreducible set $S_{t,s}$ is pure $k$-dimensional and their family is locally finite for $t$ fixed.}) in $\Omega$ converges to a positive $k$-cycle $Z_{t_0}$ when $t\to t_0$ in the sense of Tworzewski \cite{T} (see the Introduction in \cite{D4}) if\begin{itemize}
\item the analytic sets $|Z_t|:=\bigcup_s S_{t,s}$ converge to $|Z_{t_0}|$ in the sense of Kuratowski, i.e. on the one hand, for any $x\in |Z_{t_0}|$ and any sequence $t_\nu\to t_0$ there are points $|Z_{t_\nu}|\ni x_\nu\to x$, while on the other any limit point $x$ of a sequence $x_\nu\in |Z_{t_\nu}|$ chosen for $t_\nu\to t_0$, belongs to $|Z_{t_0}|$ (cf. \cite{D4} Lemma 1.4);
\item the multiplicities are presevered: for regular point $a\in\mathrm{Reg} |Z_{t_0}|$ and any relatively compact submanifold $M\Subset \Omega$ of codimension $k$ transversal to $|Z_{t_0}|$ at $a$ and such that $\overline{M}\cap |Z_{t_0}|=\{a\}$, there is $\deg(Z_t\cdot M)=\deg (Z_{t_0}\cdot M)$ for all $t$ in a neighbourhood of $t_0$, where $\deg$ denotes here the total degree of the intersection cycle (\footnote{By \cite{TW} the intersections $|Z_t|\cap M$ are finite and proper; the \textit{total degree} is the sum of the intersection multiplicites computed at the intersection points.}).
\end{itemize}
We write then $Z_t\stackrel{T}{\longrightarrow} Z_{t_0}$.

\begin{proof}[Proof of Theorem \ref{main}] 
Since $f$ is continuous, we obtain as in \cite{D4} Lemma 2.5, the Kuratowski convergence of the graphs $\Gamma_{f_t}$ to $\Gamma_{f_{t_0}}$ which together with the remark that this in fact is the local uniform convergence --- which on $\mathrm{Reg} A$ implies also the convergence of the differentials --- allows us to conclude, as in Proposition 2.6 from \cite{D4}, that the graphs converge in the sense of Kuratowski. 

Now, since $f_{t_0}^{-1}(0)$ has pure dimension $k-n$, then \cite{D4} Proposition 1.7 based on the main result of \cite{TW} shows that $f_t^{-1}(0)$ have pure dimension $k-n$ for all $t$ sufficiently close to $t_0$, which gives (1). Moreover, these zero-sets converge to $f_{t_0}^{-1}(0)$ in the sense of Kuratowski, and so applying Lemma 3.5 from \cite{T} we obtain the Tworzewski convergence $Z_{f_{t}}\stackrel{T}{\longrightarrow} Z_{f_{t_0}}$, just as in Theorem 2.7 from \cite{D4}.

We can choose coordinates in ${\C}^m$ in such a way that $f_{t_0}^{-1}(0)$ projects properly onto the first $k-n$ coordinates and we have for the intersection multiplicity the following equality: 
$$
i((\{0\}^{k-n}\times{\C}^{m-k+n})\cdot Z_{f_{t_0}};0)=\mathrm{deg}_0 Z_{f_{t_0}}.
$$
Define $\ell\colon {\C}^m\to {\C}^{k-n}$ as the linear epimorphism for which $\mathrm{Ker}\ell=\{0\}^{k-n}\times{\C}^{m-k+n}$ and take $$\varphi_t\colon A\ni x\mapsto (f_t(x),\ell(x))\in{\C}^n\times{\C}^{k-n}$$ for $t\in T$. We can find a polydisc $V\times W\subset{\C}^{k-n}\times{\C}^{m+k-n}$ centred at zero such that $$(\{0\}^{k-n}\times\overline{W})\cap f_{t_0}^{-1}(0)=\{0\}$$
and $f_{t_0}^{-1}(0)$ projects properly onto $V$.

Clearly,  $(\{0\}^{k-n}\times\overline{W})\cap f_{t_0}^{-1}(0)$ seen in  $V\times W\times \{0\}^k$ is exactly   $$(\overline{V\times W}\times \{0\}^n)\cap\Gamma_{\varphi_{t_0}}$$ where we note that $\Gamma_{\varphi_{t_0}}$ is a pure $k$-dimensional analytic set. Thus, there is a polydisc $P\subset{\C}^k$ such that $(V\times W\times P)\cap \Gamma_{\varphi_{t_0}}$ projects properly \textit{onto} $P$ along $V\times W$. This means that $\varphi_{t_0}|_{(V\times W)\cap A}$ is a proper mapping and its image is $P$.

Obviously, the mapping $$\Phi\colon T\times A\ni (t,x)\mapsto \varphi_t(x)\in{\C}^k$$ is continuous which implies (again as in \cite{D4} Proposition 2.6) that the graphs $\Gamma_{\varphi_t}$ converge in the sense of Kuratowski to $\Gamma_{\varphi_{t_0}}$ as $t\to t_0$. But the type of convergence implies that for all $t$ sufficiently close to $t_0$, the natural projection
$(V\times W\times P)\cap \Gamma_{\varphi_{t}}\to P$  
is a branched covering over $P$. In particular, all these $\varphi_t$ have the same image $P$. Let $q_t$ denote the multiplicity of the branched covering $\varphi_t|_{A\cap (V\times W)}$. 

In order to give a precise value of $q_{t}$ we remark first that actually it is the multiplicity of the projection
$$
\pi\colon {\C}^{k-1}\times{\C}^{m-k+1}\times{\C}\ni(u,v,w)\mapsto (w,u)\in{\C}\times {\C}^{k-1}
$$
over $P$ when restricted to $\Gamma_t:=\Gamma_{f_t}\cap (V\times W\times {\C})$. The classical Stoll Formula says that this multiplicity is the sum of the local multiplicities of the projection at the points of any fibre. On the other hand, as already noted in \cite{Dr}, this local multiplicity of the projection is nothing else but the isolated proper intersection multiplicity of the set we are projecting and the fibre of the linear projection. Hence, $q_t$ is just the total degree of the intersection cycle $\pi^{-1}(0)\cdot \Gamma_t$, i.e. $q_t=\deg((V\times W\times\{0\}^k)\cdot \Gamma_{t})$ (cf. the Stoll Formula). 
Moreover, in view of \cite{TW2} Theorem 2.2, we can write
\begin{align*}
(V\times W\times\{0\}^k)\cdot \Gamma_{t}&=(\{0\}^{k-n}\times W)\cdot_{V\times W\times\{0\}^k}((V\times W\times\{0\}^k)\cdot \Gamma_t)=\\
&=(\{0\}^{k-n}\times W)\cdot Z_{f_t|_{A\cap (V\times W)}}=\\
&=(\{0\}^{k-n}\times W)\cdot Z_{f_t},
\end{align*}
and so 
$$
q_t=\deg((\{0\}^{k-n}\times W)\cdot Z_{f_t}).
$$
Besides, thanks to the convergence $Z_{f_{t}}\stackrel{T}{\longrightarrow} Z_{f_{t_0}}$, we have for all $t$ in a neighbourhood $T_0$ of $t_0$
$$
\deg((\{0\}^{k-n}\times W)\cdot Z_{f_t})=\deg((\{0\}^{k-n}\times W)\cdot Z_{f_{t_0}}).
$$
From this, eventually, we obtain $q_t=\deg_0 Z_{f_{t_0}}$, $t\in T_0$.

Once we have established that, we may directly use Lemma 3.1 from \cite{D2} (which is the c-holomorphic counterpart of Lemma 1.1 in \cite{PT}) getting precisely the statement (2) for $T_0$ and $U:=V\times W$. 

In order to obtain (3) it is enough to take a closer look at how \cite{D2} Lemma 3.1 is proved. Write $A_0=A\cap U$ and consider 
$\varphi:=\Phi|_{T_0\times A_0}$. 
This is a continuous mapping with c-holomorphic sections $\varphi_t=(\varphi_{t,1},\dots, \varphi_{t_k})$ that are proper mappings $A\to P\subset {\C}^k$ of multiplicity $\delta=\deg_0 Z_{f_{t_0}}$. Take a continuous $g(t,x)$ with c-holomorphic $t$-sections vanishing on $\{\varphi_{t_1}=\ldots=\varphi_{t,n}=0\}$, i.e. on $f_t^{-1}(0)\cap U$. The properness of $\varphi_t$ allows us to define for each $t_\in T_0$ the characteristic polynomial of $g_t$ with respect to $\varphi_t$, setting
$$
p_t(w,s)=\prod_{j=1}^\delta (s-g_t(x^{(j)}))=s^\delta+\sum_{j=1}^\delta a_j(t,w)t^{\delta-j}, \quad (w,s)\in P\times{\C},
$$
where $\varphi_t^{-1}(w)=\{x^{(1)},\dots, x^{(\delta)}\}$ consists of exactly $\delta$ points and $a_j(t,w)=(-1)^j\sum_{1\leq i_1<\ldots<i_j\leq\delta}\prod_{r=1}^j g_t(x^{(r)})$ are continuous, and extend the coefficients $a_j(t,\cdot)$ through the critical locus of the branched covering $\varphi_t$ thanks to the Riemann Theorem. We obtain thus $p_t\in\mathcal{O}(P)[s]$ and, clearly, the mapping $$p\colon T_0\times A_0\times {\C}\ni (t,w,s)\mapsto p_t(w,s)\in{\C}$$ is continuous.

Observe that 
$$
p^{-1}(0)=\{(t,\varphi_t(x),g_t(x))\in T_0\times P\times {\C}\mid x\in A_0\}.
$$
The set $P\subset{\C}^k$ is a polydisc centred at zero, hence we can write $P=P_1\times\ldots\times P_k=(P_1\times\ldots\times P_n)\times P''$. Now, by the assumptions,
$$p^{-1}(0)\cap (T_0\times \{0\}^n\times P''\times{\C})=T_0\times \{0\}^n\times P''\times\{0\},
$$
which implies that $a_j|_{T_0\times \{0\}^n\times P''}\equiv 0$. We have the following parameter version of the Hadamard Lemma:
\begin{lem}
Let $h=h(t,x,y)\colon T\times D\times U\to {\C}$ be a continuous function where $T$ is a locally compact, 1st countable topological space, $D\subset{\C}^n$ is a convex neighbourhood of the origin and $U\subset{\C}^r$ is an open set. If $h_t\in\mathcal{O}(D\times U)$ and $h|_{T\times\{0\}^n\times U}\equiv 0$, then 
$$
h(t,x,y)=\sum_{j=1}^n x_jh_j(t,x,y),\quad (t,x,y)\in T\times D\times U
$$
for some continuous functions $h_j$, holomorphic in $(x,y)$. 
\end{lem}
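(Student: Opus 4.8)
The plan is to imitate the classical proof of Hadamard's lemma, producing the functions $h_j$ by an explicit integral formula, and then to check that this construction is continuous in the parameter $t$. Since $D$ is convex and $0\in D$, for every $x\in D$ the segment $sx$, $s\in[0,1]$, lies in $D$, so for fixed $t$ the fundamental theorem of calculus applied to $s\mapsto h_t(sx,y)$ together with $h_t(0,y)=h(t,0,y)=0$ gives
$$
h(t,x,y)=\int_0^1\frac{d}{ds}h_t(sx,y)\,ds=\sum_{j=1}^n x_j\int_0^1\frac{\partial h_t}{\partial x_j}(sx,y)\,ds .
$$
This suggests setting
$$
h_j(t,x,y):=\int_0^1\frac{\partial h_t}{\partial x_j}(sx,y)\,ds,\qquad (t,x,y)\in T\times D\times U .
$$
For fixed $t$ the integrand is holomorphic in $(x,y)$ and continuous in $s$, so $h_j(t,\cdot,\cdot)\in\mathcal{O}(D\times U)$ by the standard argument (Morera's theorem plus Fubini), and the displayed identity is exactly $h=\sum_j x_jh_j$. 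So the whole issue is the \emph{joint} continuity of the $h_j$.

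The key step is to deduce from the joint continuity of $h$ that $(t,x,y)\mapsto\dfrac{\partial h_t}{\partial x_j}(x,y)$ is jointly continuous. I would fix a point $(t_0,x^0,y^0)$, pick a small closed polydisc about $(x^0,y^0)$ inside $D\times U$, and use the Cauchy integral formula
$$
\frac{\partial h_t}{\partial x_j}(x,y)=\frac{1}{2\pi i}\oint_{|\zeta-x_j|=\rho}\frac{h(t,x_1,\dots,x_{j-1},\zeta,x_{j+1},\dots,x_n,y)}{(\zeta-x_j)^2}\,d\zeta
$$
valid for $t$ near $t_0$ and $(x,y)$ near $(x^0,y^0)$. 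The integrand is continuous in all its variables (because $h$ is) and the contour is a fixed compact circle, so the right-hand side is continuous in $(t,x,y)$; hence $\partial h_t/\partial x_j$ is jointly continuous on $T\times D\times U$.

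Finally, joint continuity of $h_j$ follows from continuity of integrals with a parameter. The map $(s,t,x,y)\mapsto\dfrac{\partial h_t}{\partial x_j}(sx,y)$ is continuous on $[0,1]\times T\times D\times U$, being the composition of $(s,t,x,y)\mapsto(t,sx,y)$ with the continuous map from the previous step, and we integrate over the compact interval $[0,1]$. Since $T$, and hence $T\times D\times U$, is $1$st countable it suffices to argue along sequences: given $(t_\nu,x_\nu,y_\nu)\to(t,x,y)$, the set $K=\{(t_\nu,x_\nu,y_\nu):\nu\}\cup\{(t,x,y)\}$ is compact (a convergent sequence together with its limit), so the integrands are uniformly bounded on $[0,1]\times K$, and Lebesgue's dominated convergence theorem yields $h_j(t_\nu,x_\nu,y_\nu)\to h_j(t,x,y)$. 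The only point that goes beyond the classical Hadamard argument is the passage from the joint continuity of $h$ to that of its $x$-partials, which is exactly what the Cauchy estimate above provides; the rest is routine.
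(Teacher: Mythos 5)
Your proof is correct and follows essentially the same route as the paper: both rest on the Hadamard integral representation $h_j(t,x,y)=\int_0^1\frac{\partial h_t}{\partial x_j}(sx,y)\,ds$ (using convexity of $D$ and $h(t,0,y)=0$) together with the joint continuity of $(t,x,y)\mapsto\frac{\partial h_t}{\partial x_j}(x,y)$. The only cosmetic difference is that you get this joint continuity directly from the Cauchy integral formula, while the paper deduces it from the locally uniform convergence $h_{t_\nu}\to h_{t_0}$ (as in \cite{D4}, Lemma 2.5) plus the Weierstrass-type convergence of derivatives --- the same Cauchy estimate in different clothing.
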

\begin{proof}
Since $h$ is continuous, then for each $t_0$ and $t_\nu\to t_0$, we see as in \cite{D4} Lemma 2.5 that $h_{t_\nu}$ converge to $h_{t_0}$ locally uniformly. But htese are holomorphic functions, hence $\frac{\partial h_{t_\nu}}{\partial x_j}$ converge locally uniformly to $\frac{\partial h_{t_0}}{\partial x_j}$. This in turn implies that that the functions $\tilde{h}_j(t,x,y)=\frac{\partial h_{t}}{\partial x_j}(x,y)$ are continuous (and holomorphic in $(x,y)$). 

Then we proceed as Hadamard: thanks to the convexity of $D$ we can write 
$$
h(t,x,y)=\int_{[0,1]}\sum_{j=1}^n x_j\tilde{h}_j(t,sx,y)\ ds,\quad (t,x,y)\in T\times D\times U.
$$
Therefore, $h_j(t,x,y):=\int_{[0,1]}\tilde{h}_j(t,sx,y)\ ds$ are the functions looked for. Their continuity follows from classical analysis.
\end{proof}

Applying this Lemma to $a_j$ with $w=(w_1,\dots, w_n, w'')\in P_1\times\ldots\times P_n\times P''$ we obtain $a_j(t,w)=\sum_{i=1}^n w_i a_{ji}(t,w)$ with $a_{ji}$ continuous functions with holomorphic $t$-sections. 

Finally, (3) follows from the identity $p(t,\varphi(t,x),g(t,x))\equiv 0$. This ends the proof of Theorem \ref{main}.
\end{proof}

\section{A particular case: isolated zeroes.}

Basing on \cite{D2} Theorem 4.1 (cf. \cite{S} and \cite{D1}) we can obtain a counterpart of the main theorem in the case of improper isolated intersection (a c-holomorphic parameter version of the main result of \cite{Cg}).

Observe that for a c-holomorphic mapping $h\colon A\to{\C}^n$ on a pure $k$-dimensional analytic subset of some open set in ${\C}^m$ containing zero and such that $h^{-1}(0)=\{0\}^m$ we have $$\mathrm{deg}_0 Z_h=i(\Gamma_h\cdot ({\C}^m \times\{0\}^n);0)$$
where the isolated intersection multiplicity $i_0(h):=i(\Gamma_h\cdot ({\C}^m \times\{0\}^n);0)$ is computed according to Draper \cite{Dr} (when $n=k$ i.e. the proper interesection case), or according to the extension of  Achilles-Tworzewski-Winiarski \cite{ATW} in the improper case (i.e. $n>k$; note that always $n\geq k$, for $h$ is necessarily proper in a neighbourhood of zero). In the proper intersection case, this multiplicity is just the geometric multiplicity (covering number) $m_0(f)$.

\begin{thm}
Let $T,A,f$ be as earlier. Assume moreover that $f_{t}(0)=0$ for any $t$, and that $f_{t_0}^{-1}(0)=\{0\}^m$. Then there is a neighbourhood $T_0\times U$ of $(t_0,0)\in T\times A$ and for each $t\in T_0$ there is a neighbourhood $V_t\subset U$ such that for $\delta=i_0(f_{t_0})$, \begin{enumerate}
\item for all $t\in T_0$, $\#\overline{U}\cap f_t^{-1}(0)=\# {U}\cap f_t^{-1}(0)\leq \delta$, $\overline{U}\cap f_{t_0}^{-1}(0)=\{0\}^m$;
\item for all $t\in T_0$, $\overline{V_t}\cap f_t^{-1}(0)=\{0\}$ and for any $g\in \mathcal{O}_c(A\cap V_t)$ such that $g(0)=0$,
$g^\delta\in I_t(V_t)$ where $\delta$ is independent of $t\in T_0$;
\item if $f^{-1}(0)\cap (T_0\times U)=T_0\times\{0\}^m$ and if $g\colon T_0\times (A\cap U)\to {\C}$ is continuous and such that each $g_t\in\mathcal{O}_c(A\cap U)$ vanishes at the origin, then there is a continuous function $h\colon T_0\times (A\cap U)\to {\C}^n$ with c-holomorphic $t$-sections and such that $g^\delta=\sum_{j=1}^n h_{j}f_{j}$.
\end{enumerate} 
\end{thm}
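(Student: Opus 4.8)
The plan is to run the scheme of the proof of Theorem~\ref{main}, with two substitutions: the cycle of zeroes of a proper intersection is replaced by the Achilles--Tworzewski--Winiarski index of the isolated (possibly improper) intersection $\Gamma_{f_{t_0}}\cap(\Omega\times\{0\}^n)$, and the c-holomorphic proper Nullstellensatz \cite{D2}, Lemma~3.1, is replaced by its isolated-intersection counterpart \cite{D2}, Theorem~4.1 (the c-holomorphic version of Cygan's theorem \cite{Cg}; cf.\ \cite{S}, \cite{D1}). The parameter is threaded through exactly as before. First I would reproduce the opening of the proof of Theorem~\ref{main}: continuity of $f$ gives, as in \cite{D4}, Lemma~2.5 and Proposition~2.6, the local uniform convergence $f_t\to f_{t_0}$ and the Kuratowski convergence $\Gamma_{f_t}\to\Gamma_{f_{t_0}}$ as $t\to t_0$. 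Since $0$ is isolated in $f_{t_0}^{-1}(0)$, \cite{D4}, Proposition~1.7 (based on \cite{TW}), shows that, after shrinking, $f_t^{-1}(0)$ stays $0$-dimensional near $0$ for $t$ close to $t_0$; picking a polydisc $U$ with $\overline{U}\cap f_{t_0}^{-1}(0)=\{0\}^m$ we obtain a neighbourhood $T_0$ of $t_0$ for which $f_t^{-1}(0)\cap\overline{U}$ is a finite subset of $U$ whenever $t\in T_0$.

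Second, the reduction of the improper intersection to a proper one. By the Achilles--Tworzewski--Winiarski description of the improper index (cf.\ \cite{ATW}) there is a dense set of linear epimorphisms $L\colon\C^n\to\C^k$ for which, writing $\psi_t:=L\circ f_t\in\mathcal{O}_c(A,\C^k)$, one has $m_0(\psi_{t_0})=i_0(f_{t_0})=\delta$; I fix such an $L$. Since $\psi_{t_0}^{-1}(0)=f_{t_0}^{-1}(\ker L)$ and a generic $(n-k)$-plane $\ker L$ meets the $k$-dimensional germ $f_{t_0}(A)$ only at the origin, after one more shrinking of $U$ we may assume $\psi_{t_0}^{-1}(0)\cap\overline{U}=\{0\}$ and that $\psi_{t_0}|_{A\cap U}$ is a branched covering of multiplicity $\delta$ onto a polydisc $P\subset\C^k$. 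The map $(t,x)\mapsto\psi_t(x)$ being continuous, the argument of \cite{D4}, Proposition~2.6, together with the Stoll-formula / cycle-convergence computation from the proof of Theorem~\ref{main}, shows that for $t$ in a (possibly smaller) $T_0$ the restriction $\psi_t|_{A\cap U}$ is still a branched covering onto $P$, of multiplicity $q_t=\deg_0 Z_{\psi_{t_0}}=\delta$, and $\psi_t^{-1}(0)\cap U$ is finite. Now $f_t^{-1}(0)\cap U\subset\psi_t^{-1}(0)\cap U$, and the points of the latter carry Draper multiplicities summing to $q_t=\delta$; in particular there are at most $\delta$ of them, which is statement~(1).

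For~(2), fix $t\in T_0$. Since $\psi_t^{-1}(0)\cap U$ is finite and contains $0$, pick a polydisc $V_t\subset U$ with $\overline{V_t}\cap\psi_t^{-1}(0)=\{0\}$; then $\psi_t|_{A\cap V_t}$ is a branched covering of some multiplicity $\mu_t\le q_t=\delta$ with $\psi_t^{-1}(0)\cap V_t=\{0\}$. Any $g\in\mathcal{O}_c(A\cap V_t)$ with $g(0)=0$ thus vanishes on the whole fibre $\psi_t^{-1}(0)\cap V_t$, so \cite{D2}, Lemma~3.1, yields $g^{\mu_t}\in(\psi_{t,1},\dots,\psi_{t,k})\,\mathcal{O}_c(A\cap V_t)$; each $\psi_{t,i}$ being a $\C$-linear combination of $f_{t,1},\dots,f_{t,n}$, this ideal sits inside $I_t(V_t)$, whence $g^\delta=g^{\delta-\mu_t}g^{\mu_t}\in I_t(V_t)$. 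Note that $V_t$ must be allowed to depend on $t$: for $t\neq t_0$ the fibre $\psi_t^{-1}(0)$ need not reduce to $\{0\}$, which is precisely why (2) is not stated with the uniform $U$.

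Statement~(3), under the extra hypothesis $f^{-1}(0)\cap(T_0\times U)=T_0\times\{0\}^m$, is where I expect the real work. The natural attempt is to imitate part~(3) of the proof of Theorem~\ref{main}: form, for $t\in T_0$, the characteristic polynomial $p_t(w,s)=\prod_{j=1}^\delta(s-g_t(x^{(j)}))=s^\delta+\sum_{j=1}^\delta a_j(t,w)s^{\delta-j}$ of $g_t$ with respect to $\psi_t|_{A\cap U}$, extend the coefficients $a_j(t,\cdot)$ holomorphically through the branch locus by Riemann's theorem, observe that $p$ is continuous and $p_t(\psi_t(x),g_t(x))\equiv0$, i.e.\ $g_t^\delta=-\sum_j a_j(t,L(f_t(x)))\,g_t^{\delta-j}$, and then invoke the parameter Hadamard Lemma above to express the $a_j$ through the coordinate functions. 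The obstruction --- and the reason this case needs a separate treatment --- is that this last step needs $a_j(t,0)=0$, which holds only if $g_t$ vanishes on the \emph{entire} fibre $\psi_t^{-1}(0)\cap U=f_t^{-1}(\ker L)$, whereas the hypothesis only gives vanishing on $f_t^{-1}(0)\cap U=\{0\}$. I would circumvent this by using the construction inside the proof of \cite{D2}, Theorem~4.1, which produces the division $g^{i_0(f)}=\sum_j h_jf_j$ on a \emph{full} neighbourhood of an isolated zero (and not merely on a shrunken polydisc), together with the observation that, thanks to the hypothesis, the superfluous zeros of $\psi_t$ all lie off $f_t^{-1}(0)$, so they can be separated off by auxiliary generic projections $L^{(1)},\dots,L^{(N)}$ chosen with $\bigcap_r\ker L^{(r)}=\{0\}$ and $m_0(L^{(r)}\circ f_{t_0})=\delta$ --- which forces $\bigcap_r(L^{(r)}\circ f_t)^{-1}(0)=\{0\}$ for every $t\in T_0$ --- and then patching the resulting local divisions. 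As always, the continuity in $t$ of every coefficient appearing is guaranteed by the local uniform convergence $f_t\to f_{t_0}$ (differentiating it where holomorphy is available), and the final passage back to $g^\delta=\sum_j h_jf_j$ is the parameter Hadamard Lemma; making the patching genuinely continuous on all of $T_0\times(A\cap U)$ is the bulk of what remains.
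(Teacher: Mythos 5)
For parts (1) and (2) your argument is essentially the paper's own: you compose with a linear epimorphism $L\colon\C^n\to\C^k$ chosen generically (the paper takes $\ker L\cap C_0(X_0)=\{0\}$ for $X_0=f_{t_0}(V\cap A)$, you take $L$ generic in the Achilles--Tworzewski--Winiarski sense), so that $F=L\circ f$ falls under Theorem~\ref{main} with $\deg_0 Z_{F_{t_0}}=i_0(f_{t_0})=\delta$; then (1) follows from $f_t^{-1}(0)\subset F_t^{-1}(0)$ and $\#F_t^{-1}(0)\le\deg Z_{F_t}=\delta$, and (2) from localizing at a fixed $t$ in a $V_t$ isolating the fibre, where the local exponent $\mu_t$ (the paper's $\delta_t=\deg_0 Z_{F_t}$) does not exceed $\delta$, followed by the constant linear substitution turning a combination of the $L_j\circ f_t$ into a combination of the $f_{t,i}$. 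This part is fine and matches the paper.

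The gap is in (3). You correctly isolate the delicate point --- $g_t$ is only assumed to vanish at the origin, whereas a direct application of Theorem~\ref{main}(3) to $F=L\circ f$ requires $g_t$ to vanish on all of $F_t^{-1}(0)\cap U=f_t^{-1}(\ker L)\cap U$, which may be strictly larger than $f_t^{-1}(0)\cap U$ --- but what you offer instead is not a proof: you appeal to the inner workings of \cite{D2}, Theorem~4.1, to finitely many auxiliary projections $L^{(1)},\dots,L^{(N)}$ with $\bigcap_r\ker L^{(r)}=\{0\}$, and to a ``patching'' of the resulting local divisions, and you concede yourself that making this patching continuous on $T_0\times(A\cap U)$ ``is the bulk of what remains''. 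That is precisely the content of statement (3): there is no partition of unity available in the c-holomorphic category, so gluing local representations $g^\delta=\sum_j h_jf_j$ into one whose coefficients are continuous in $(t,x)$ and c-holomorphic in $x$ is not a routine step, and neither the separation of the superfluous zeroes nor the choice of the $L^{(r)}$ is actually carried out. The paper's route is different and much shorter: under the hypothesis $f^{-1}(0)\cap(T_0\times U)=T_0\times\{0\}^m$ it deduces (3) directly from Theorem~\ref{main}(3) applied to the single map $F$ (one parametrized characteristic polynomial, one use of the Hadamard lemma with parameter), converting $\sum_j h_j(L_j\circ f)$ into $\sum_i\tilde h_if_i$ exactly as in (2); in other words it reduces to the case, singled out earlier in its proof, where $F_t^{-1}(0)\cap U$ coincides with $f_t^{-1}(0)\cap U$. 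Your observation that this identification is the point needing justification is a fair comment on the paper's brevity, but your proposed multi-projection detour does not close it either; as written, your part (3) remains unproven.
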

\begin{proof}
Fix a neighbourhood $V\subset{\C}^m$ of zero such that $f_{t_0}$ is a proper mapping on $V\cap A$ over some neighbourhood $G\subset{\C}^n$ of the origin. then $X_0:=f_{t_0}(V\cap A)$ is an analytic, pure $k$-dimensional subset of $G$, by the Remmert Theorem. Consider a linear epimorphism $L\colon {\C}^n\to {\C}^k$ such that $\mathrm{Ker} L\cap C_0(X_0)=\{0\}^n$. Then $F:=(L\circ f)\colon T\times A\to {\C}^k$ satisfies the assumptions of our Main Theorem. Let $T_0, U$ and $\delta$ be as in the Main Theorem applied to $F$. Write $A_0:=A\cap U$. We restrict our considerations to this set.

Observe that $F_t=L\circ f_t$. Therefore, since $\deg_0 Z_{F_{t_0}}=i(\Gamma_{F_{t_0}}\cdot(U\times \{0\}^k);0)$ and we check as in the proof of Theorem 2.6 in \cite{D1} that the latter is equal to $i(\Gamma_{f_{t_0}}\cdot(U\times\{0\}^n);0)$, we obtain $\deg_0 Z_{F_{t_0}}=i_0(f_{t_0})$. 

Now, as we have $Z_{F_{t}}\stackrel{T}{\longrightarrow} Z_{F_{t_0}}=i_0(f_{t_0})\{0\}^m$ (by the previous proof), the type of convergence implies that --- possibly after shrinking $U$ so that the conclusions of the Main Theorem hold true in a neighbourhood of the compact set $\overline{U}$ --- we have (1). Indeed, since $\deg Z_{F_{t}}=\deg Z_{F_{t_0}}$ whenever $t$ is sufficiently close to $t_0$, we have $\#F_t^{-1}(0)=\#|Z_{F_t}|\leq \deg Z_{F_t}=\delta$. But 
$$
F_t(x)=0\ \Leftrightarrow\ x\in f_t^{-1}(\mathrm{Ker} L)
$$
and obviously, $f_t^{-1}(0)\subset f_t^{-1}(\mathrm{Ker} L)$. 

Assume for the moment that for all $t\in T_0$ we have $\mathrm{Ker} L\cap f_t(A)=\{0\}^k$ so that $f_t(x)=0$ iff $F_t(x)=0$. Hence, if $g$ vanishes on the zeroes of $f_t$, it does so on the zeroes of $F_t$. Property (2) follows now from the obvious fact that for the components of the mapping we have $(L\circ f_t)_j=L_j\circ f_t$ and as $L_j$ is a linear form, we can write it in the form $L_j(w)=\sum_{i=1}^na_{j,i}w_i$ for some $a_{j,i}\in{\C}$. Therefore, if $g_t^\delta$ is a combination of $(L\circ F)_{t,j}$ with some coefficients $h_{t,i}\in\mathcal{O}_c(A_0)$, $i=1,\dots, n$, we have \begin{align*}
g_t(x)^\delta&=\sum_{i=1}^n h_{t,i}(x) (L_j\circ f_t)(x)=\\
&=\sum_{i=1}^n h_{t,i}(x)\sum_{\kappa=1}^n a_{j,\kappa} f_{t,\kappa}(x).
\end{align*}
It remains to put $\tilde{h}_{t,j}:=\sum_{i=1}^n a_{j,i} h_{t,i}\in\mathcal{O}_c(A_0)$ to get (2).

In the general case, since $\dim f_t^{-1}(0)=0$ and $f_t(0)=0$, we obviously can find neighbourhoods $V_t$ isolating the origin in the fibre for $t\in T_0$. The previous arguments applied for a fixed $t\in T_0$ assure that for each $g\in\mathcal{O}_c(A\cap V_t)$ vanishing at zero, we have $g^{\delta_t}\in I_t(V_t)$ with $\delta_t=\mathrm{deg}_0 Z_{F_t}=i_0(f_t)$. But $\mathrm{deg}_0 Z_{F_t}\leq \deg Z_{F_t}$ and we have already shown that the latter does not exceed $\delta$, whence $\delta_t\leq\delta$. It remains to observe that $g^\delta=g^{\delta-\delta_t}\cdot g^{\delta_t}\in I_t(V_t)$ and (2) is proved.

Finally, (3) follows easily from the above and Theorem \ref{main} (3).
\end{proof}
It should be pointed out that the Kuratowski convergence of the images $X_t:=f_t(A)\stackrel{K}{\longrightarrow} f_{t_0}(A)=:X_{t_0}$ (\footnote{Which we have by Lemma 4.4 from \cite{DD}.}) together with $f_t(0)=0$ for all $t$, does not imply the Kuratowski convergence of the tangent cones. In particular if $\Lambda$ is a linear subspace transversal to $C_0(X_0)$, it need not be transversal to any other $X_t$.
\begin{ex}
Let $f(t,x)=(x^2,tx)$ for $(t,x)\in{\C}^2$ and $t_0:=0$. It satisfies the assumptions of our Theorem, but $X_t=\{t^2u=v^2\}$ are parabol{\ae} converging to the $u$-axis $X_0=\{v^2=0\}$ counted twice for the Tworzewski convergence, by the way). Clearly $\Lambda:=C_0(X_t)=\{u=0\}$ ($t\neq 0$) is transversal to $C_0(X_0)=\{v=0\}$. 
\end{ex}

Note also that $L\circ f_t$ can indeed produce some extra zeroes which explains why we were not able to find a neighbourhood of zero independent of the parameter in the proof above:
\begin{ex}
Let $A=\{xy=0\}$ be the union of the two axes in ${\C}^2$. Put for $(t,x,y)\in {\C}\times A$,
$$
f(t,x,y)=\begin{cases}
(x^2, tx), &\textrm{if}\ y=0,\\
(y^2+y, ty), &\textrm{if}\ x=0.
\end{cases}
$$
The assumptions of our Theorem are satisfied for $t_0=0$. Now, $X_0$ is simply the $x$-axis in ${\C}^2$, while $X_t$ ($t\neq 0$) consists of two parabol{\ae} of equations $t^2x=y^2$ and $t^2x=y(y+t)$. 

Now, the $y$-axis $\Lambda$ is transversal to $X_0$, but intersects $X_t$ in two points: the origin and $(0,-t)$. Hence for $L(x,y)=x$ we have $(L\circ f_t)^{-1}(0)=\{(0,0), (0,-t)\}$, whereas $f_t$ itself vanishes only at the origin. 
\end{ex}

\section{Acknowledgements}

During the preparation of this paper the author was partially supported by Polish Ministry of Science and Higher Education grant  1095/MOB/2013/0. 

This article was written during the author's stay at the University Lille 1 whose staff he thanks for excellent working conditions.

\end{document}